\newtheorem{thm}{Theorem}
\newtheorem{lem}[thm]{Lemma}
\newtheorem{prop}[thm]{Proposition}
\theoremstyle{definition}
\newtheorem{rem}[thm]{Remark}
\newtheorem{ex}[thm]{Example}
\long\def\comment#1\endcomment{}
\begin{document}


\title{On unsolvable equations of prime degree}

\author{Juliusz Brzezi\'nski}
\address{Department of Mathematical Sciences, Chalmers University of
Technology and University of Gothenburg.
SE 412 96 Gothenburg, Sweden} 
\email{jub@chalmers.se}

\author{Jan Stevens}
\address{Department of Mathematical Sciences, Chalmers University of
Technology and University of Gothenburg.
SE 412 96 Gothenburg, Sweden} 
 \email{stevens@chalmers.se}

\begin{abstract}Leopold Kronecker observed that either all roots or only one root
of a solvable irreducible equation of odd prime degree with integer coefficients
are real. This gives a possibility to construct specific examples of equations
not solvable by radicals.
A relatively elementary proof without using the full power of Galois theory is due to
Heinrich Weber.  We give a rather short proof of Kronecker's Theorem with an argument that is slightly different from Weber's.
Several modern presentations of Weber's proof
contain inaccuracies, which can be traced back to an error in the original proof.
We discuss this error and how it can be corrected.
\end{abstract}

\subjclass{Primary  12F10, Secondary  12-03 01A55}

\keywords{Solvability by radicals, irredicible polynomials of prime degree, Galois theory,
Kronecker, Weber}

\maketitle

\section*{Introduction}

One of the main objectives of any course in Galois theory is a proof of the existence of polynomials that can not be solved by radicals, showing that
 similar formulas as for quadratic, cubic, or quartic equations cannot exist for  higher degree equations.
Today   this kind of  problem mainly has  a pedagogical and historical value. Galois theory is an extremely important part of mathematics and therefore one of the fundamental parts of mathematical education. Solvability of  polynomial equations is used 
as an illustration of the effectivity of abstract algebraic methods in solving concrete, important, and interesting mathematical problems.  The fact that
most equations cannot be solved  by radicals
is usually presented using the full power of Galois theory, that is, the Galois correspondence and the relations between  solvable equations and solvable groups.
A hint that a more elementary approach is possible, is given  by the circumstance that the
original impossibility  proof of  Niels-Henrik \href{https://zbmath.org/authors/abel.niels-henrik}{Abel}
predates the work of \'Evariste \href{https://zbmath.org/authors/galois.evariste}{Galois}.

Specific examples of unsolvable equations are easily obtained from the fact that
for an irreducible equation of odd prime degree with integer coefficients
all of its roots or only one are real, an observation made by
Leopold \href{https://zbmath.org/authors/kronecker.leopold}{Kronecker} \cite{K2}.
A  relatively elementary proof, using rather limited knowledge related to the field extensions,
was
given by Heinrich \href{https://zbmath.org/authors/weber.heinrich-martin}{Weber} \cite{WW1}.

Weber's proof occurs in the algebra textbook \cite{N} by
Trygve \href{https://zbmath.org/authors/nagell.trygve}{Nagell}.
Thanks to this book
in Swedish, the first Author  became aware of the
possibility to prove the unsolvability of the quintic  in an ``elementary'' way and
transformed it into  Exercise 13.6 in \cite{B}.  Unfortunately, the included solution
is incorrect.  We were led to a closer study of Nagell's proof and to a search
for its origin. We found several recent presentations of Weber's proof in textbooks
\cite{Pr, MJP}, which are based on the English translation \cite{ DE}
of the  popular account
by Heinrich \href{https://zbmath.org/authors/dorrie.heinrich}{Dörrie} \cite{D}.
Several objections to \cite{DE} and expositions following it have been raised,
see \cite{PC,Sk}.
The ultimate source for the  inaccuracies observed is an error in Weber's proof.
The problem lies in reducible radicals (radicals of the form $\alpha=\sqrt[q]a$ with
$X^q-a$ reducible). The need to deal with them also
complicates the variant of the proof we developed.

In Section 1, we place Kronecker's Theorem in a historical context. We discuss Kronecker's arguments for the theorem and trace the history
of ``elementary'' proofs. In Section 2, we discuss the criteria that make the proofs of Kronecker's Theorem elementary. We also recall a number of algebraic results that, according to these criteria, are needed in its elementary proofs.  Section 3 contains our proof of Kronecker's  Theorem. Finally, in Section 4, we discuss the
mistake in Weber's proof and in proofs inspired by his idea, as well as ways in which
the proofs can be corrected.

\section{History}
After Gerolamo \href{https://zbmath.org/authors/cardano.gerolamo}{Cardano}
published the solution of cubic and quartic equations in 1545,
attempts were made to find solutions of quintic equations, expressing their roots
in a similar way as function of the coefficients, by formulas involving only  arithmetical operations and radicals.
The work of E.~\href{https://zbmath.org/authors/waring.edward}{Waring},
A.-T.~\href{https://zbmath.org/authors/vandermonde.a-t}{Vandermonde} and especially
J.-L.~\href{https://zbmath.org/authors/lagrange.joseph-louis}{Lagrange} around 1770
suggested  that such formulas for equations of degree 5 probably do not exist.
This was proven to be so in the work of
 Paulo 
\href{https://zbmath.org/authors/ruffini.paolo}{Ruffini}
and Niels Henrik Abel between 1799 and 1826, see the expository paper by
Michael \href{https://zbmath.org/authors/rosen.michael-i}{Rosen}
\cite{R}.

Much of the effort of  Abel, Galois,
and the mathematicians continuing their work,
concentrated on 
the characterization  of the equations solvable by radicals.
The problem was clearly formulated by Abel  in his  unfinished memoir 
``Sur la resolution algébrique
des équations'' \cite{Ab}. He also indicated the way to arrive at its solution.
According to Abel one should find all equations of a given degree  that are 
algebraically solvable.
In the course of his investigations he arrives at ``several general propositions about the solvability of equations and about the form of
their roots'' \cite[p. 219]{Ab}.

The investigation of the form of the roots of a solvable equation
was taken up by Kronecker, see the interesting paper by
Birgit \href{https://zbmath.org/authors/petri.birgit}{Petri} and
Norbert \href{https://zbmath.org/authors/schappacher.norbert}{Schappacher} \cite{PS}.
Three years after his first note on the subject \cite{K1}, Kronecker observes
in \cite{K2} the following:
\begin{quote}
Wenn eine irreductible Gleichung mit ganzzahligen Coëfficienten auflösbar 
und der Grad derselben eine ungrade
Primzahl ist, so sind entweder \textit{alle} ihre Wurzeln oder nur \textit{eine} reell.
\footnote{If an irreducible equation with integer coefficients is solvable
and the degree of it is an odd
prime, then either \textit{all} of its roots or only \textit{one} are real.}
\end{quote}
This is a special case of the general result:
\selectlanguage{german}
\begin{quote}
Wenn eine Gleichung — deren Grad eine ungrade Primzahl $\mu$ ist, de\-ren
Coëfficienten rationale Functionen irgend welcher reeller Grö\-ßen  $A, B, C,  \dots$
also selbst reell sind und welche endlich nicht in Factoren niederen Grades
zerlegt werden kann, so daß deren Coëfficienten wiederum rationale
Functionen von $A, B, C,  \dots$ wären --- durch eine explicite algebraische
Function jener Größen $A, B, C,  \dots$ erfüllt wird, so sind entweder \textit{alle}
ihre Wurzeln, oder nur \textit{eine} derselben reell.
\footnote{If an equation — whose degree is an odd prime $\mu$, whose
coefficients are rational functions of any real quantities $A, B, C,  \dots$
and therefore themselves real and which finally cannot  be decomposed in 
factors of lower degree so that their coefficients are again rational
functions of $A, B, C,  \dots$ --- is satisfied by an explicit algebraic
function of those quantities $A, B, C,  \dots$ , then either \textit{all} of
its roots, or only \textit{one} of them are real.}
\end{quote}
\selectlanguage{english}
At the time Kronecker wrote this, the concept of field was not yet defined. With it
Kronecker's result can be formulated in the following way.

\begin{thm}[Kronecker's Theorem]
Let $K$ be a subfield of the real numbers.
Suppose that an irreducible
polynomial $f(X)\in K[X]$ of odd  prime degree $p$
is solvable by radicals. Then exactly one root or all roots of the polynomial are real.
\end{thm}

As Kronecker clearly states, he found his results from the study of  the form of the roots
of solvable polynomials.
This study was
continued by Heinrich Weber \cite{W},
Anders \href{https://zbmath.org/authors/wiman.anders}{Wiman} \cite{Wi}
and recently by
Harold \href{https://zbmath.org/authors/edwards.harold-m}{Edwards} \cite{E}.
In the penultimate section of the first volume of his algebra textbook \cite{W},
Weber connects the number of real roots to the
number of real roots of an auxiliary equation of degree $p-1$,
and this seems to establish  Kronecker's Theorem, but Weber
starts out from the result, referring to a simple proof earlier in his book.
This simple proof was already mentioned by Kronecker himself \cite{K2}:
\selectlanguage{german}
\begin{quote}
Ich bemerke ferner,
daß die angegebene Eigenschaft der irreductibeln auflösbaren Gleichungen $\mu$ten
Grades nicht bloß aus der allgemeinen Form ihrer Wurzeln hervorgeht, sondern
auch aus dem schon von Galois herrührenden Satze „daß jede Wurzel einer solchen
Gleichung sich als rationale Function von irgend zwei andern darstellen läßt“.
Wenn nämlich diese Function nur reelle Coefficienten enthält,  so folgt hieraus 
unmittelbar, daß alle Wurzeln reell sein müssen, sobald nur zwei derselben reell sind.
\footnote{Furthermore I remark,
that the stated property of  irreducible solvable equations of degree $\mu$
does not only  follow from the general shape of their roots, but
also from the Theorem due to Galois, “that every root of such an
equation can be represented as a rational function of any two others''.
Namely, if this function only contains real coefficients, then it follows
immediately that all roots must be real as soon as only two of them are real.}
\end{quote}
\selectlanguage{english}
Galois' result can be formulated  in modern language as follows.
\begin{thm}[Galois' Theorem]\label{galois}
An irreducible
polynomial equation of prime degree $p$ over a
subfield of $\mathbb{C}$ is solvable by radicals if and only if
its splitting field is generated by any two of its
roots.
\end{thm}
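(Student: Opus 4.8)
The plan is to translate everything into the Galois group and then settle a purely group-theoretic equivalence. Let $f$ be irreducible of prime degree $p$ over the number field $K$, with roots $\theta_1,\dots,\theta_p$ and splitting field $L=K(\theta_1,\dots,\theta_p)$; since $\mathrm{char}\,K=0$ the extension $L/K$ is Galois, and $G=\mathrm{Gal}(L/K)$ embeds in $S_p$ through its action on the roots. Irreducibility makes this action transitive, so $p$ divides $|G|$. I would invoke the standard theorem of Galois theory that $f$ is solvable by radicals if and only if $G$ is a solvable group. By the Galois correspondence, $L=K(\theta_i,\theta_j)$ holds exactly when the identity is the only element of $G$ fixing both $\theta_i$ and $\theta_j$; hence ``$L$ is generated by any two of the roots'' is equivalent to the \emph{one-fixed-point property} $(\ast)$: every non-identity element of $G$ fixes at most one root. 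The theorem thus reduces to the assertion that \emph{a transitive subgroup $G\le S_p$ is solvable if and only if it satisfies $(\ast)$.}

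For the direction ``solvable $\Rightarrow(\ast)$'' I would pick a minimal normal subgroup $N$ of $G$, which in a solvable group is elementary abelian. As $N$ is normal its orbits have a common size dividing $p$; this size cannot be $1$ (the action is faithful), so $N$ is transitive. A transitive abelian group is regular, whence $|N|=p$ and $N=\langle\sigma\rangle$ for a $p$-cycle $\sigma$. Identifying the roots with $\mathbf{F}_p$ so that $N$ becomes the group of translations, the fact that $G$ normalizes $N$ forces every $g\in G$ to act as an affine map $t\mapsto at+b$ with $a\in\mathbf{F}_p^{\times}$ and $b\in\mathbf{F}_p$; that is, $G$ is a subgroup of the affine group $\mathrm{AGL}(1,p)$. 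Since an affine map with two fixed points is the identity, $(\ast)$ follows.

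For ``$(\ast)\Rightarrow$ solvable'' the key is a counting argument forcing the Sylow $p$-subgroup to be normal. A Sylow $p$-subgroup $P$ has order exactly $p$ (as $p^2$ does not divide $p!$) and is generated by a $p$-cycle, so its nontrivial elements are fixed-point-free. By transitivity and Burnside's orbit count, $\sum_{g\in G}\#\mathrm{Fix}(g)=|G|$; the identity contributes $p$ and $(\ast)$ forces every other contribution to be $0$ or $1$, so exactly $p-1$ elements are fixed-point-free. Writing $n_p$ for the number of Sylow $p$-subgroups, the $n_p(p-1)$ elements of order $p$ are all fixed-point-free, so $n_p(p-1)\le p-1$, giving $n_p=1$, i.e. $P$ is normal in $G$. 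Finally the centraliser of a $p$-cycle in $S_p$ is $\langle\sigma\rangle=P$, so $G/P$ embeds in $\mathrm{Aut}(C_p)\cong C_{p-1}$ and is cyclic; hence $G$ is metacyclic and in particular solvable.

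The main obstacle is precisely this group theory of transitive permutation groups of prime degree --- classically Galois' own theorem that solvability is equivalent to being a subgroup of the one-dimensional affine group. The delicate points are the regularity of the minimal normal subgroup in the first direction and the derangement count pinning down $n_p=1$ in the second; the field-theoretic reductions (the Galois correspondence and the equivalence between solvability by radicals and solvability of $G$) are standard and would simply be cited.
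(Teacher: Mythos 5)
Your proof is correct, but it cannot be ``compared'' against an internal argument, because the paper does not prove Theorem~\ref{galois} at all: it states it as a known result and refers to \cite[Ex.~13.9]{B} for a proof. What you supply is the classical route, and all the steps check out. The field-theoretic reduction is right: since the roots generate $L$, the action of $G=\mathrm{Gal}(L/K)$ on them is faithful and transitive, and by the Galois correspondence $L=K(\theta_i,\theta_j)$ for every pair $i\neq j$ is exactly the one-fixed-point property $(\ast)$. In the direction ``solvable $\Rightarrow(\ast)$'', your minimal normal subgroup $N$ is elementary abelian, its orbits have common size dividing $p$ and cannot be singletons (faithfulness), so $N$ is transitive, hence regular because abelian, hence $N=\langle\sigma\rangle$ of order $p$; normalizing $N$ then forces $G\le\mathrm{AGL}(1,p)$, where an affine map with two fixed points is the identity. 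In the converse, the Burnside count $\sum_{g\in G}\#\mathrm{Fix}(g)=|G|$ together with $(\ast)$ does yield exactly $p-1$ derangements; since Sylow $p$-subgroups have order $p$ and intersect trivially, the $n_p(p-1)$ elements of order $p$ are $p$-cycles and hence derangements, so $n_p=1$, and then $C_G(P)=P$ (centralizer of a $p$-cycle in $S_p$ has order $p$) gives $G/P\hookrightarrow\mathrm{Aut}(C_p)\cong C_{p-1}$, so $G$ is metacyclic and solvable. (Once $n_p=1$ you could finish a touch faster: $G\le N_{S_p}(P)\cong\mathrm{AGL}(1,p)$, which is solvable.) One remark on fit with the paper's theme: your argument invokes the full equivalence ``solvable by radicals $\Leftrightarrow$ solvable Galois group'', which is precisely the machinery the paper deliberately avoids in its elementary proof of Kronecker's theorem; that is harmless here, since Theorem~\ref{galois} plays only a historical, auxiliary role in the paper and is not used in its main proof.
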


Kronecker's Theorem follows immediately, just as Kronecker remarked,
since a solvable irreducible polynomial of prime degree $p$ over a field $K\subset \mathbb R$  having at least two real roots $\alpha, \beta$ splits in the field $K(\alpha, \beta)$, so all its roots are real.

Galois proved this result in his famous
``Mémoire sur les conditions de résolubilité des équations par radicaux"
that was rejected in 1831 by
the Académie des Sciences, and only published by Joseph \href{https://zbmath.org/authors/liouville.joseph}{Liouville} in 1846, see \cite{Neu}; for a proof see \cite[Ex. 13.9]{B}. S.-D.~\href{https://zbmath.org/authors/poisson.s-d}{Poisson}, the referee of the memoir,
noted in his report (see \cite[p. 146]{Neu}) that an analogous proposition of Abel was printed after Abel's death
in \textit{Crelle’s Journal} \cite{Bruch}. The excerpt in question of Abel's letter
to A.L. Crelle from October 18, 1828 reads:

\selectlanguage{french}
\begin{quote}
Si trois racines d’une équation quelconque irréductible, dont le degré est un
nombre premier, sont liées entre elles de sorte que l’une de ces racines peut
être exprimée rationnellement par les deux autres, l’équation en question
sera toujours résoluble à l’aide de radicaux.
\footnote{If three roots of an arbitrary equation of which the degree is a prime number
are related to each other in such a way that any one of these roots may be
expressed rationally by the other two, the equation in question will
always be solvable with the help of radicals.}
\end{quote}
\selectlanguage{english}

It gives only a sufficient condition. The report goes on to say that no proof has been published and that Galois' proof of his necessary and sufficient condition is not satisfactory. Poisson complained
that the memoir did not
contain, as the title promised, the condition of
solvability of equations by radicals. For, to decide whether
a given equation of prime degree is solvable,
one should first have to determine
if this equation is irreducible, and then if one
of its roots can be expressed as a rational function of two others.
The condition, if it exists, should be verifiable by inspecting the
coefficients of a given equation, or, at most, by solving other
equations of lower degree (see \cite[p. 148]{Neu}).

Kronecker's Theorem gives a possibility
 to construct
equations that are not solvable by radicals: any irreducible polynomial equation
of odd prime degree  with integral coefficients having more than one real root and also
complex conjugate roots will do. One of the simplest examples is the
polynomial $X^5-4X-2$, see Figure \ref{figgraph}. The function $f(X)$ has only two real extrema and the polynomial is irreducible by the
Schönemann--Eisenstein criterion, published by
Th.~\href{https://zbmath.org/authors/schonemann.th}{Schönemann} in 1846 and
G.~\href{https://zbmath.org/authors/eisenstein.gotthold}{Eisenstein} in 1850 (see \cite{C});
this criterion was of course unknown to Poisson.

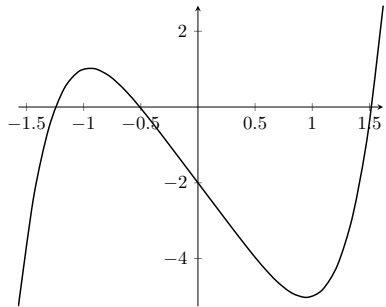
\begin{figure}[h]
\centering
\begin{tikzpicture}
  \begin{axis} [axis lines=center]
    \addplot [domain=-1.57:1.62, smooth, thick] { x^5 - 4*x -2 };
  \end{axis}
\end{tikzpicture}
\caption{The graph of $f(X)=X^5-4X-2$}
\label{figgraph}
\end{figure}

A similar example is given in Ian \href{https://zbmath.org/authors/stewart.ian-n}{Stewart}'s textbook Galois Theory \cite{St}, but on the basis
of Weber's theorem \cite[\S 186]{W}, that an irreducible polynomial $f(X)\in K[X]\subset \mathbb R[X]$
 of prime degree $p$
with $p-2$ real and 2 complex nonreal roots has as Galois group  the symmetric
group $S_p$, see \cite[Ex. 13.4]{B}. We note that Galois' Theorem
as stated above does not occur in \cite{St}, as in many other modern textbooks.


A proof of Kronecker's Theorem 
was given by Weber
in the first volume of Weber--\href{https://zbmath.org/authors/wellstein.josef}{Wellstein}'s
\textit{Encyklopädie der Elementar-Mathematik}
\cite[\S\,101]{WW1},  originally for the case $p=5$, and in  later editions for
general $p$. Elementary mathematics here means  roughly the
mathematics taught in secondary school, and 
this ``handbook for teachers and students'' is mainly directed at teachers.
A fourth, heavily revised edition \cite{WW2} was prepared by Paul 
\href{https://zbmath.org/authors/epstein.paul}{Epstein}. 
He  made almost no changes in the chapter on solvable equations, but modernized
the terminology.


Weber's proof is suitable for a university course in algebra that treats the
solution of equations, but not Galois theory. Such  a course was given
at Uppsala University by Trygve Nagell since 1931.
In his textbook \cite{N}
Nagell treats the case $p=5$ following the first edition of Weber--Wellstein.

The insolvability of the quintic was included as one of the hundred problems
in Heinrich Dörrie's book
\textit{Triumph der Mathematik} \cite{D}, intended
for a general audience. It contains a variety of problems, like Kirkman's
schoolgirl problem, or the length of the polar night, but also the transcendence of $\pi$.
Dörrie was one of the first Ph.D students of David
\href{https://zbmath.org/authors/hilbert.david}{Hilbert} with a dissertation on
the quadratic reciprocity law in  quadratic number fields with class number one.
Dörrie's book has been translated into Japanese, Hungarian, and English. The
English translation \cite{DE} is unfortunately marred by strange terminology: Körper
is not translated as field but as group, Adjunktion as substitution.
For the insolvability of equations  Dörrie follows the 1922 edition of Weber--Wellstein
\cite{WW2}.
His version is also the basis for the expositions in \cite{PC}, \cite{MJP}, and
\cite{Pr}. The last text inspired the proof in \cite{Sk}. None of the sources mentioned
cites Weber.\footnote{The latest version of the preprint \cite{PC} contains a footnote
stating that Weber earlier gave the same proof.}


\section{Preliminaries}
\subsection{What is an elementary proof?}
Elementary certainly means avoiding deep general theory. In the context of Kronecker's
Theorem this excludes the  fundamental theorem of Galois theory.
What can be used depends on the target group of the exposition. Almost a century ago Dörrie wrote
for a general audience. Consequently he started  his chapter on
Kronecker's Theorem by explaining what a field (consisting of numbers) is and 
proved the facts
about field extensions needed for Weber's proof, but avoided  the concept of
group.  The  proof uses
the theory of field extensions within the scope defined by the contents of a second course in algebra at a university level. For our notion of elementary we have 
students taking such a course in mind.
The considerations can always be limited to subfields $K$ of the complex numbers $\mathbb{C}$, which we assume throughout this article. Thus, it is possible to include field extensions by the roots of polynomials (consequently, their splitting fields),
the description of simple extensions, and the behavior of the degree according to the Tower Law. It is essentially no problem to discuss automorphism groups
of fields, since it is simply one more example of groups, which are always
discussed in such courses. Thus, the notion of the Galois group of a polynomial $f(X) \in K[X]$ as the group of all automorphisms of its splitting field $K_f=K(\alpha_1,\ldots,\alpha_n)$ that fix every element of $K$, where the $\alpha_i$ are all the roots of $f(X)=0$
in $\mathbb{C}$, may also be accepted as elementary. This group we denote by $G(K_f/K)$.  What is essential to avoid is the fundamental theorem of Galois theory (together with the Galois correspondence, in particular,
between normal subgroups and normal extensions of the ground field), and as regards
the notion of solvability, the correspondence between solvable groups and solvable extensions. But the notion of a solvable extension of fields is, of course, elementary.
It is often presented as an example of the power of (abstract) algebraic language
when one wants to formulate the problem of algebraic solvability and, in particular, when one wants to encourage the students to read a course in Galois theory in order to
 to find out why there is no formula for an algebraic solution
 of a general quintic.

We can freely  use modern concepts  such as dimension and groups
of field automorphisms, replacing the old fashioned arguments of Weber and Dörrie.
We note that not only they, but also Nagel deal with field automorphisms, looking at their actions on rational functions of the roots and 
considering  ``transformations'' of these rather complicated expressions, and therefore
with Galois groups, without mentioning them explicitly.

\subsection{Field extensions.}
In this subsection, we recall several well-known results concerning the ``elementary part'' of the theory of field extensions.  
As a general reference for  the notions and results that we mention, 
we use \cite{B}. Let $K\subseteq L$ be a field extension. An element  $\alpha\in L$ is
\textit{algebraic} over $K$ if
there is a nonzero polynomial $f(X)\in K[X]$ such that $f(\alpha)=0$. Zeros
of univariate polynomials are called \textit{roots}.
Recall that a field extension $K\subseteq L$ is called \textit{finite} if $L$ has finite dimension as a vector space  over $K$. This dimension is denoted by $[L:K]$
and is called the \textit{degree} of $L$ over $K$. The field extension
$K\subseteq L$ is \textit{simple},
if $L$ is obtained by adjoining
one element  $\alpha$, so $L$ is of the form $K(\alpha)$.
If $\alpha$ is algebraic over $K$, then the \textit{minimal polynomial}
of $\alpha$ is the monic polynomial of smallest degree with coefficients in $K$ that has
$\alpha$ as a root. For the reader's convenience and easier reference, we formulate two 
elementary properties of finite field extensions.

\begin{lem}[Simple Extension Theorem {\cite[T.4.2]{B}}]\label{lemSimpleExtension}
If $\alpha$ is algebraic  over a field $K$ and  $n$ is the degree of its minimal
 polynomial over  $K$, then $1,\alpha,\ldots,\alpha^{n-1}$ is a basis of $K(\alpha)$ over $K$. Thus $[K(\alpha):K]=n$
and each element in $K(\alpha)$ can be uniquely
  represented as $b_0+b_1\alpha+\cdots + b_{n-1}\alpha^{n-1}$, where
  $b_i\in K$.
\end{lem}

\begin{lem}[Tower Law {\cite[T.4.3]{B}}]\label{lemTowerLaw}
Let $K \subseteq L$ and $L \subseteq M$ be finite field extensions. Then $K \subseteq M$ is a finite field extension and $[M: K]=[M: L][L: K]$.
\end{lem}

A \textit{splitting field} $L=K_f$ for a polynomial $f(X) \in K[X]$ is a field $L$  that contains
all roots $\alpha_1,\dots,\alpha_n$ of $f$ and is generated over $K$ by these roots, that is,
$L=K(\alpha_1,\dots,\alpha_n)$.
Recall that a field extension $K \subseteq L$ is \textit{normal} if every irreducible polynomial
over $K$ that has one root in $L$ necessarily has all the remaining roots in $L$.
A proof that splitting fields of polynomials over a field $K$
coincide with
finite normal extensions $L$ of this field is usually not included in ``elementary'' algebra courses, but it is  straightforward and  uses only the structure of simple extensions and the Tower Law together with basic properties of isomorphism of fields  (see the proof \cite [p. 120]{B} of \cite[T.7.1]{B}).

Let $K \subseteq L$ be a finite extension. The \textit{Galois group} of $L$ over $K$,
denoted by $G(L/K)$, is the group of all automorphisms of $L$ that fix $K$ elementwise.
If $H$ is any subgroup of  $G(L/K)$, then $L^H$ is the subfield of $L$
consisting of all elements, which are fixed by all automorphism belonging to $H$.

Two properties of isomorphisms of fields play an essential role
in several arguments. The first is
often proved in the context of  a  theorem usually attributed to Kronecker,  stating that for any field
and any non-constant polynomial, there exists an extension field containing
a root of this polynomial. The second Lemma is less known, but its proof is
closely related to the first one.

\begin{lem} \label{lemIsomorphism}\textup{(a)} Let $K$ be a
field and let $\alpha, \alpha'$ be two roots
of an irreducible polynomial $f(X) \in K[X]$. Then there is an isomorphism
$\sigma\colon K(\alpha) \rightarrow K(\alpha')$ such that $\sigma(\alpha)=\alpha'$
and $\sigma$ is the identity on $K$.

\noindent\textup{(b)}
If $\tau\colon  K \to  K'$ is an isomorphism of fields, $L$ a splitting field of a polynomial
$f \in K[X]$,  and $L'$ a splitting field of the polynomial
$\tau(f) \in K'[X]$, then there exists
an isomorphism $\sigma\colon L \to  L'$ extending $\tau$ \textup{(}that is, $\sigma|_K=\tau$\textup{)}.
\end{lem}
For a proof of (a) we refer to \cite[T.5.1]{B}, and for a proof of (b) to \cite[T.5.2]{B}. It follows by induction from \cite[T.5.1]{B}, which is a more general version of (a), where
$\sigma$ extends an isomorphism $\tau$.

\begin{lem}  \label{lemIsoTwo} Let $K \subset L$ be a normal extension and let $\alpha, \alpha' \in L$
be two roots of an irreducible polynomial $f(X) \in K[X]$. Then there is an automorphism $\sigma$ of $L$ over $K$ such that $\sigma(\alpha)=\alpha'$.
 \end{lem}

We sketch the argument  from \cite[Ex. 7.4 (a)]{B}.  Since $L$ is a finite and normal extension of $K$, it is a splitting field of a polynomial, say $g(X) \in K[X]$, and also a splitting field of the same polynomial over both $K(\alpha)$ and $K(\alpha')$. By
Lemma \ref{lemIsomorphism}\,(b)
the isomorphism $\tau\colon K(\alpha) \rightarrow K(\alpha')$ over $K$ mapping $\alpha$ onto $\alpha'$ has an extension to an isomorphism of $L$.


\begin{rem}  \label{remGaloisCorr}
Notice that  Lemma \ref{lemIsoTwo}  says that the automorphism group of a normal field extension $L$ of a field $K$ acts transitively on the roots of any irreducible polynomial in $K[X]$ that has its roots in $L$. Moreover, it follows that 
$L^{G(L/K)}=K$: if $\alpha$ and $\alpha'$ are two different roots
of the minimal polynomial $f(X)$ of $\alpha$ over $K$, then 
there is an automorphism $\sigma$
of $L$ over $K$ such that $\sigma(\alpha)=\alpha'$. Therefore $\alpha$ is not contained
in the field $L^{G(L/K)}$, which of course contains $K$. This also applies to any
intermediate field $K \subseteq M \subseteq L$, since  $L$ is also normal over $M$ as the splitting field of the same polynomial whose splitting field over $K$ is equal to $L$.
This gives the ``trivial'' part of the Galois correspondence, which says that if we go from a subfield $M$ to the subgroup $G(L/M)$ and back to the corresponding field $L^{G(L/M)}$, then we come back to the same subfield $M$. In particular, this says that different subfields $M$ go to different subgroups $G(L/M)$ of $G(L/K)$.
The difficult part of the Galois correspondence
says that $G(L/L^H)=H$ for every subgroup $H$ of $G(L/K)$.
\end{rem}


We are interested in algebraic solvability of polynomials, which means expressing the roots
using arithmetical operations and radicals. The problem is best formulated in terms of
fields. A \textit{simple radical extension} is an extension $K\subseteq K(\alpha)$, where
$\alpha^q =a\in K$ for some positive integer $q$, that is, $\alpha$ is a root
of the  binomial polynomial $X^q-a$.
A field extension   $K\subseteq L$ is \textit{radical} if there is a chain
\begin{equation}\label{chain}
K=K_0 \subseteq K_1 \subseteq \cdots K_{i-1} \subseteq K_i \subseteq \cdots \subseteq K_{n}=L 
\end{equation}
of simple radical extensions $K_{i}=K_{i-1}(\alpha_i)$.
We may assume that the $q_i$ are prime numbers, by using the fact
that $\sqrt[rs]a=\sqrt[r]{\sqrt[s]a}$.
A polynomial $f(X) \in K[X]$ is \textit{solvable by radicals}, if  a splitting field of $f(X)$ is contained in a radical extension $K\subseteq L$ \cite[p. 78]{B}.

We stress the fact that  in the definition of a simple radical extension  we do not assume
that the polynomial $X^{q}-\alpha^{q}$ is irreducible over $K$.
If $X^{q}-\alpha^{q}$ is irreducible, then $\alpha=\sqrt[q]{a}$ is 
\label{pageRedIrred} 
called an \textit{irreducible radical}. Otherwise, that is, if $X^{q}-a$ is reducible
over $K$, then any $\alpha=\sqrt[q]{a}$ is called a {\it reducible radical}.
A polynomial $f(X) \in K[X]$ is \textit{solvable by irreducible radicals} if  $K_f\subseteq L$,
where $K\subseteq L$ is a radical extension such that in the chain of simple radical extensions every
$X^{q_i}-\alpha_i^{q_i}$ is irreducible over $K_{i-1}$.
From the (constructive)  point of view of writing down radical expressions for the roots
of an equation it is quite natural to  require irreducibility in the definition of
solvability. But this gives the same class of polynomials: a
solvable polynomial is also solvable by irreducible radicals, see e.g.,  \cite[Ch. 21]{St}.

In the following
sections, we need a few properties of the splitting fields of the
binomials $X^q-a$.
We gather them in the following Lemma.

\begin{lem}\label{lemBinomial} Let $K$ be a subfield of $\mathbb{C}$, $q$ a prime number, and
$a \in K\setminus\{0\}$.

\noindent\textup{(a)}  The splitting field of $X^q-a\in K[X]$
over $K$ is $K'=K(\alpha,\varepsilon)$, where $\alpha\in K'$ is a root of  $X^q-a$
and $\varepsilon$ a primitive $q$-th root of unity.

\noindent\textup{(b)} The binomial $X^q-a$ is reducible over $K$ if and only if it has a root in $K$.

\noindent\textup{(c)} If $\varepsilon \in K$ and $X^q-a$ is irreducible over $K$, then $K'=K(\alpha)$ and $[K':K]=q$. All automorphisms of $K'$ over $K$ are mappings $\varphi(\alpha)=\varepsilon^r\alpha$,
$r =0,1,\ldots,q-1$. Thus the Galois group is the cyclic group $G(K'/K)=\mathbb{Z}_q$.

\noindent\textup{(d)} If $\varepsilon \notin K$ and $X^q-a$ is reducible over $K$, then $K'=K(\varepsilon)$ is the splitting field of the polynomial $X^q-1$.
 Every automorphism of $K'$ maps $\varepsilon$
onto a power of $\varepsilon$.
\end{lem}

 We do not present detailed proofs, but we comment on the points above.

(a) The equation $X^q-a=0$ has $q$ different solutions $\varepsilon^r\sqrt[q]{a}$, where $\varepsilon \neq 1$ denotes a (primitive) $q$-th root of unity,  $r=0,\ldots,q-1$ and $\alpha=\sqrt[q]{a}$ is any fixed solution to the equation $X^q-a=0$. Thus the splitting field of the binomial $X^q-a$ over $K$ is equal to $K'=K(\alpha,\varepsilon)$ (\cite[Ex. 5.11 (c)]{B}).

(b) This is a result of Abel. The fact that the polynomial $X^q-a$ is reducible over $K$ if and only if $a$ is a $q$-th power
in  $K$ is proved in \cite[Ex. 5.12 (a)]{B}. 

(c)  If $\varepsilon \in K$ and $X^q-a$ is irreducible, then according to (b),
$\alpha \notin K$, since $a$ is not a $q$-th power in $K$ ($\alpha^q=a$). The extension $K'=K(\alpha)$ of $K$ is simple, of degree $q$ by the Simple Extension Theorem (Lemma
\ref{lemSimpleExtension}).
According to  (a), the polynomial $X^q-a$ has $q$ different  roots $\varepsilon^r\alpha$ for $r=0,1,\dots,q-1$.
Every automorphism of $K'$ maps $\alpha$ on a root of the same polynomial. Thus,
we have $q$ different automorphisms mapping $\alpha$ onto $\varepsilon^r \alpha$.
The exponents $r$ form the group $\mathbb{Z}_q$ of all residues $r$ modulo $q$ (as $\varepsilon^q=1)$.

(d) If $X^q-a$ has a root $\alpha \in K$, then it follows from (a) that $K'=K(\varepsilon)$, where as before $\varepsilon$ denotes a primitive $q$-th root of unity. 
Thus $K'$ is a splitting field of the polynomial $X^q-1$, since all $q$-th roots of unity are powers of $\varepsilon$.
The minimal polynomial of
$\varepsilon$ over $K$ divides the minimal polynomial of $\varepsilon$
over $\mathbb{Q}$,  which has degree $q-1$; it is $(X^q-1)/(X-1)$ (\cite[Ex. 4.3 (f)]{B},
for the history of this result see \cite{C}).
Every automorphism maps $\varepsilon$ onto another $q$-th
root of unity, which  is a power $\varepsilon^k$ for some $1\leq k \leq q-1$.

\subsection{Auxiliary results.} The proofs of the following results, which are needed for
Weber's and our proof of Kronecker's Theorem,
 do not depend on Galois theory.

\begin{lem}[Nagell's Lemma]\label{nagell} If
$f\in K[X]$ has prime degree $p$ and is irreducible over $K$, but reducible over
a field extension $K\subset L$, then
$p\mid [L:K]$.
\end{lem}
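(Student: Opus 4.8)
The plan is to bound the degree $[L:K]$ below by the degree of the minimal polynomial of any root of $f$ over an intermediate field, and to exploit the fact that $f$ already has degree $p$ over $K$.

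Let me think about this carefully. We have $f \in K[X]$ irreducible of prime degree $p$, so if $\alpha$ is any root of $f$ then $[K(\alpha):K] = p$. Over $L$, the polynomial $f$ factors. The natural idea: pick any root $\alpha$ of $f$ and consider the field $L(\alpha)$. On one hand $[L(\alpha):L]$ equals the degree of the minimal polynomial $g$ of $\alpha$ over $L$, which is an irreducible factor of $f$ in $L[X]$; since $f$ is reducible over $L$ this factor has degree strictly less than $p$. On the other hand, we can compute $[L(\alpha):K]$ in two ways using the tower law.

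First I would write the tower $K \subseteq L \subseteq L(\alpha)$, giving $[L(\alpha):K] = [L(\alpha):L]\cdot[L:K]$. Then I would write the alternative tower $K \subseteq K(\alpha) \subseteq L(\alpha)$, giving $[L(\alpha):K] = [L(\alpha):K(\alpha)]\cdot[K(\alpha):K] = [L(\alpha):K(\alpha)]\cdot p$. Equating the two expressions yields
\[
[L(\alpha):L]\cdot[L:K] = [L(\alpha):K(\alpha)]\cdot p.
\]
Thus $p$ divides the product $[L(\alpha):L]\cdot[L:K]$. Since $p$ is prime, $p$ divides one of the factors. If $\deg g = [L(\alpha):L] < p$, then because $p$ is prime and $\deg g \ge 1$, the prime $p$ cannot divide $[L(\alpha):L]$; hence $p$ must divide $[L:K]$, which is exactly the conclusion.

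The one point that genuinely requires the hypothesis is the strict inequality $[L(\alpha):L] < p$: this is precisely where reducibility of $f$ over $L$ enters, since the minimal polynomial of $\alpha$ over $L$ is a proper factor of $f$. I would expect this to be the main (and essentially only) obstacle, and it is more a matter of being careful than of difficulty: one must ensure that the chosen $\alpha$ has minimal polynomial over $L$ of degree genuinely smaller than $p$, which follows at once from the existence of a nontrivial factorization of $f$ in $L[X]$ (take $\alpha$ to be a root of any irreducible factor of degree less than $p$). Everything else is the multiplicativity of degrees in towers together with the primality of $p$.
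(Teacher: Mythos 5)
Your proof is correct, and it is essentially the argument the paper relies on: the paper gives no proof of its own but cites \cite[Ex.\ 4.13]{B}, where the result is established by exactly this two-tower degree count ($[L(\alpha):L]\cdot[L:K]=[L(\alpha):K(\alpha)]\cdot p$ with $[L(\alpha):L]<p$ forced by reducibility). You correctly isolate the one point needing care --- that every irreducible factor of $f$ over $L$ has degree strictly less than $p$ --- so nothing further is required, beyond noting that the statement implicitly assumes $[L:K]$ finite, as it is in the paper's application.
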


 \begin{proof}
Let $r=[L:K]$ and let $\alpha$ be a root of $f(X)$ in a field extension of $L$. As $f(X)$ is
reducible in $L$, we have $[L(\alpha):L]=m<p=[K(\alpha):K]$. Because $L(\alpha)$ is an
extension of $K(\alpha)$, $p=[K(\alpha):K]\mid [L(\alpha):K]=mr$, where the last equality holds by the Tower Law (Lemma \ref{lemTowerLaw}). Therefore $p\mid r$.
\end{proof}

The Lemma is formulated differently in  \cite[\S 98, p. 259]{N} and  \cite[Ex. 4.13]{B},
but the  proofs given there
establish the present result.
In fact, it follows from the general fact  that if the  degree of an irreducible polynomial in $K[X]$ and the degree of a finite field extension $L$ of $K$ are relatively prime, then the polynomial remains irreducible over $L$ (see  \cite[Ex. 4.2]{B}).
Weber \cite{WW2} and Dörrie \cite{D} use Proposition
\ref{epstein} below with the same effect.

The following  Lemma can be found in \cite[Ex. 7.8]{B}. It only depends on
Lemma \ref{lemIsomorphism} and the Tower Law.

\begin{lem}\label{equal}
Let $K \subset L$ be a normal extension and let $f(X)$ be a monic polynomial
irreducible over $K$ but reducible over $L$. Then all irreducible factors of $f(X)$ in 
$L[X]$ have the same degree.
\end {lem}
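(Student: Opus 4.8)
The plan is to use that $K \subset L$ is not merely normal but Galois: since we work in characteristic zero the extension is automatically separable, so the group $G = G(L/K)$ satisfies $L^G = K$. I would let $G$ act on the polynomial ring $L[X]$ by acting on coefficients. This action fixes $K[X]$ elementwise, and in particular fixes $f$, since $f \in K[X]$.

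First I would factor $f = g_1 g_2 \cdots g_r$ into monic irreducible polynomials in $L[X]$; because $f$ is separable, these factors are pairwise distinct. For each $\sigma \in G$ the polynomial $\sigma(g_i)$ is again monic and irreducible over $L$, and $\prod_i \sigma(g_i) = \sigma(f) = f = \prod_i g_i$, so by unique factorization in $L[X]$ the automorphism $\sigma$ permutes the set $\{g_1, \dots, g_r\}$. Since every field automorphism preserves degrees, each $\sigma$ sends every $g_i$ to a factor of the same degree; hence it suffices to show that $G$ acts transitively on $\{g_1, \dots, g_r\}$.

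To establish transitivity I would argue by contradiction with an orbit-grouping device. Suppose the factors split into $s \geq 2$ orbits under $G$, and for each orbit form the product $h_j$ of the factors it contains, so that $f = h_1 \cdots h_s$. Each orbit is by definition $G$-stable as a set, so every $\sigma \in G$ merely permutes the factors making up $h_j$ among themselves and therefore fixes $h_j$; thus the coefficients of $h_j$ lie in $L^G = K$, that is, $h_j \in K[X]$. Since each $h_j$ has degree strictly between $0$ and $\deg f$, this exhibits $f$ as a nontrivial factorization over $K$, contradicting the irreducibility of $f$ over $K$. Hence $s = 1$, the action is transitive, and all the $g_i$ share a common degree.

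The argument is essentially obstacle-free; the only point requiring a little care is the transitivity step, and the grouping of factors into $G$-invariant products that descend to $K[X]$ is exactly the device that resolves it. An alternative I would have in mind is to pass to a finite Galois extension $M/K$ containing both $L$ and a splitting field of $f$, to identify the irreducible factors of $f$ over $L$ with the orbits of $G(M/L)$ — a normal subgroup of $G(M/K)$, precisely because $L/K$ is normal — acting on the roots of $f$, and then to invoke the general fact that a normal subgroup has equal-size orbits under a transitive group action. The direct argument above has the advantage of avoiding the auxiliary field $M$.
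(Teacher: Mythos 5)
Your proof is correct and complete. Note that the paper itself gives no proof of this lemma at all: it is quoted with a citation to \cite[Ex.\ 7.8]{B}, so there is no in-paper argument to match against. Your main argument is the standard Galois-descent one, and every step checks out: in characteristic zero (the standing assumption of the section) a normal extension is Galois, so $L^{G}=K$; $G$ permutes the monic irreducible factors of $f$ by unique factorization; and the orbit products $h_j$ descend to $K[X]$, so irreducibility of $f$ over $K$ forces a single orbit, hence equal degrees. The only points deserving a remark are (i) your use of $L^{G}=K$ is harmless even if one does not assume $[L:K]<\infty$, since this identity holds for arbitrary Galois extensions, and in the paper's applications the extensions are finite anyway; and (ii) the separability of $f$ (distinctness of the $g_i$) is convenient but not essential, as the orbit argument also works on the multiset of factors. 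A common textbook alternative --- likely the one behind the cited exercise --- avoids computing with $L^G$: pick roots $\beta,\gamma$ of two irreducible factors $g,h$ in a splitting field $M$ of $f$ over $L$, extend the $K$-isomorphism $K(\beta)\to K(\gamma)$ to $\sigma\in G(M/K)$, use normality of $L/K$ to get $\sigma(L)=L$, and conclude $\sigma(g)=h$, so $\deg g=\deg h$; your closing sketch via the normal subgroup $G(M/L)\trianglelefteq G(M/K)$ and equal-size orbits is essentially this argument in group-theoretic dress. Your direct version buys self-containedness (no auxiliary field $M$), while the isomorphism-extension version gives transitivity of $G$ on the factors more explicitly root by root; both are valid.
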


\begin{proof}
If $\alpha_i$ and $\alpha_j$ are roots (in an extension of $L$) of two irreducible factors $f_i(X)$ and $f_j(X)$ of $f(X)$ over $L$, then there is an isomorphism
$\tau\colon K(\alpha_i)\to K(\alpha_j)$ over $K$, by Lemma \ref{lemIsomorphism}\,(a) (since both $\alpha_i, \alpha_j$ are roots of $f(X)$).
Let  $L$ be a splitting field of a polynomial $g(X)\in K[X]$. Then $L(\alpha_i)$ is a splitting field of $g(X)$ over $K(\alpha_i)$, and $L(\alpha_j)$ is a splitting field of $g(X)$ over $K(\alpha_j)$. By Lemma \ref{lemIsomorphism}\,(b) this
automorphism can be extended to an isomorphism
$\sigma\colon L(\alpha_i)\to L(\alpha_j)$ over $K$.  Therefore the degrees
$[L(\alpha_i):K]$ and $[L(\alpha_j:K]$ are equal
and thus also the degrees $[L(\alpha_i):L]$ and $[L(\alpha_j):L]$ are equal, which
are just the degrees of $f_i(X)$ and $f_j(X)$ .
\end{proof}

Kronecker's Theorem is about the solvability of polynomials
with real coefficients. We need terminology and results adapted to this situation.
We say that a field $K \subseteq \mathbb{C}$ is {\it conjugation invariant} if  complex conjugation is an automorphism of this field. Notice that if $K$ is a conjugation invariant field, then its extension $K(\alpha)$, where $\alpha \in \mathbb{C}$, is conjugation invariant if and only if $\bar{\alpha} \in K(\alpha)$. 

\begin{lem}\label{LemConjInv}
Let  $K\subseteq L$ be a radical extension of  a conjugation invariant field $K$.
Then there exists a chain $K=K_0\subseteq K_1 \subseteq \cdots\subseteq K_m$ of simple
radical extensions (of prime degree) with $L\subseteq K_m$, where all fields $K_i$ are
conjugation invariant.
\end{lem}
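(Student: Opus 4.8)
The plan is to walk along the given chain \eqref{chain} and, at each stage, enlarge the current field to a conjugation invariant one by adjoining the missing complex conjugate radical, which is itself a radical. Concretely, I would argue by induction on the length $n$ of the chain, constructing conjugation invariant fields $\tilde K_0\subseteq\tilde K_1\subseteq\cdots\subseteq\tilde K_n$, each obtained from the previous one by at most two simple radical extensions and satisfying $K_i\subseteq\tilde K_i$. For the base case I set $\tilde K_0=K_0=K$, which is conjugation invariant by hypothesis. Throughout I regard all fields as subfields of $\mathbb C$, which is harmless since $K\subseteq L$ is algebraic.

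For the inductive step, suppose $\tilde K_{i-1}\supseteq K_{i-1}$ has been built and is conjugation invariant. Recall $K_i=K_{i-1}(\alpha_i)$ with $\alpha_i^{q_i}=a_i\in K_{i-1}\subseteq\tilde K_{i-1}$. The crucial observation is that $\overline{\alpha_i}$ is again a radical over $\tilde K_{i-1}$: indeed $\overline{\alpha_i}^{\,q_i}=\overline{a_i}$, and $\overline{a_i}\in\tilde K_{i-1}$ because $\tilde K_{i-1}$ is conjugation invariant. I therefore set
\[
\tilde K_i=\tilde K_{i-1}(\alpha_i,\overline{\alpha_i}),
\]
reached from $\tilde K_{i-1}$ by the two simple radical steps $\tilde K_{i-1}\subseteq\tilde K_{i-1}(\alpha_i)$ (adjoining a $q_i$-th root of $a_i\in\tilde K_{i-1}$) and $\tilde K_{i-1}(\alpha_i)\subseteq\tilde K_{i-1}(\alpha_i,\overline{\alpha_i})$ (adjoining a $q_i$-th root of $\overline{a_i}\in\tilde K_{i-1}$). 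Clearly $K_i=K_{i-1}(\alpha_i)\subseteq\tilde K_i$.

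It remains to check that each $\tilde K_i$ is conjugation invariant. Complex conjugation fixes $\tilde K_{i-1}$ setwise by the inductive hypothesis and interchanges the two generators $\alpha_i$ and $\overline{\alpha_i}$; hence it maps the generating set of $\tilde K_i$ into $\tilde K_i$, and being an involution it maps $\tilde K_i$ onto itself. Equivalently, writing $\tilde K_i=\tilde K_{i-1}(\theta)$ for a primitive element $\theta$, one has $\overline\theta\in\tilde K_i$, so the criterion recalled before the statement, \cite[T.4.2]{B}, applies. This completes the induction. Finally $L=K_n\subseteq\tilde K_n$, so relabelling the simple radical extensions produced along the way as $K=K_0\subseteq K_1\subseteq\cdots\subseteq K_m=\tilde K_n$ gives the required chain.

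The one point demanding care---and the reason the naive idea of simply making the top field $L$ conjugation invariant does not suffice---is that the intermediate fields $\tilde K_{i-1}(\alpha_i)$ need not themselves be conjugation invariant, so the conjugate root $\overline{\alpha_i}$ may genuinely lie outside them. I expect the only real work to be the verification that adjoining $\overline{\alpha_i}$ stays within the class of simple radical extensions; this is exactly what the conjugation invariance of $\tilde K_{i-1}$ secures, since it places $\overline{a_i}$ in the base field and thereby exhibits $\overline{\alpha_i}$ as a bona fide radical. Reducibility of the relevant binomials plays no role here, as the definition of radical extension adopted in this section does not require the $X^{q_i}-a_i$ to be irreducible.
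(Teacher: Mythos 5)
Your construction does not prove the lemma as stated. After relabelling, your chain of simple radical extensions reads
\[
\tilde K_0\subseteq \tilde K_0(\alpha_1)\subseteq \tilde K_1\subseteq \tilde K_1(\alpha_2)\subseteq \tilde K_2\subseteq\cdots,
\]
and the intermediate fields $\tilde K_{i-1}(\alpha_i)$ are members of that chain. As you yourself observe in your closing paragraph, these fields need not be conjugation invariant --- but the lemma requires that \emph{all} fields $K_0,\dots,K_m$ of the chain be conjugation invariant, not merely every second one. So you have proved a strictly weaker statement, and the weakening is fatal for the intended application: in the proof of Kronecker's Theorem one picks the first index at which $f(X)$ becomes reducible, and the argument that the coefficients $c_j$ in \eqref{eqFirst} are real conjugates an identity with $c_j\in K_{i-1}$ and invokes uniqueness, which requires $\bar c_j\in K_{i-1}$, i.e.\ conjugation invariance of the \emph{base} field of that very step. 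With your chain, the first reducibility can occur exactly at a step $\tilde K_{i-1}(\alpha_i)\subseteq \tilde K_i$ whose base is not conjugation invariant; Example \ref{vdmonde} of the paper realizes this danger concretely: for the cyclic quintic with roots $2\cos\frac{2k\pi}{11}$ and $\alpha=\zeta\sqrt[11]{37}$, the polynomial becomes reducible only upon adjoining $\bar\alpha$ to the non-invariant field $\mathbb Q(\alpha)$.

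The repair is to reverse the order of adjunction, which is exactly what the paper does. Set $\rho_i=\alpha_i\bar\alpha_i$; then $\rho_i^{q_i}=a_i\bar a_i\in\tilde K_{i-1}$ (conjugation invariance of $\tilde K_{i-1}$ places $\bar a_i$ there, as in your argument), and one passes through
\[
\tilde K_{i-1}\subseteq \tilde K_{i-1}(\rho_i)\subseteq \tilde K_{i-1}(\rho_i,\alpha_i)=\tilde K_{i-1}(\alpha_i,\bar\alpha_i).
\]
The first step is a simple radical extension whose top field is conjugation invariant because $\rho_i$ is \emph{real}; the second is a simple radical extension (still $\alpha_i^{q_i}=a_i$) whose top field is conjugation invariant because $\bar\alpha_i=\rho_i/\alpha_i$ lies in it. Thus every field of the chain is conjugation invariant, and the rest of your induction goes through verbatim (the paper runs the induction from the last step of the chain rather than the first, but that difference is immaterial). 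Your final remark that reducibility of the binomials $X^{q_i}-a_i$ plays no role here is correct and agrees with the paper.
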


\begin{proof}
We use induction with respect to the number $n$ of different simple radical extensions between $K$ and $L$. 
As already noted, we may assume that every simple radical extension is by a root of a binomial of prime degree.
Assume  that the Lemma is true for every radical extension of
$K$ by such a chain of at most $n \geq 0$ simple radical extensions.
The base case, when $L=K$ (that is, $n=0$), is trivially true.

Let $L$ be a radical extension of $K$ by a chain of $n+1$ simple radical extensions (each by a root of a binomial of prime degree). Let
$L'\subset L$ be the last subextension in the chain.
Then $K \subset L'(\alpha)=L$, where $\alpha^q = a \in L'$ for a prime number $q$. The field $L'$ is a radical extension of $K$ by $n-1$ simple radical extensions.
By the induction hypothesis, we have $L' \subset K_{m'}$, where
$K = K_0 \subset K_1 \subset  \cdots \subset K_{m'}$ is a chain of simple radical extensions with all fields $K_i$ conjugation invariant
and  $K_i= K_{i-1}(\alpha_i)$ with $\alpha_i^{q_i} = a_i \in K_{i-1}$ and $q_i$ a prime number for $i = 1,\ldots,m'$.

We construct a chain for $L$. We have that $L\subset K_{m'}(\alpha)$. If $\bar\alpha\in K_{m'}(\alpha)$, then
$K_{m'}(\alpha)$ is conjugation invariant and we obtain the required chain by  putting $K_{m'+1}= K_{m'}(\alpha)$.
If $\bar\alpha\notin K_{m'}(\alpha)$ we consider the real number $\rho = \alpha\bar\alpha$.
The fields $K_{m'+1}=K_{m'}(\rho)$ and $K_{m'+2}=K_{m'+1}(\alpha)=K_{m'}(\rho,\alpha)= K_{m'}(\alpha,\bar\alpha)$ are conjugation invariant.
Moreover, we have $\rho^q = \alpha^q\bar\alpha^q=a\bar a \in K_{m'}$ and $\alpha^q =a \in L' \subseteq K_{m'} \subseteq K_{m'+1}$, which shows that $K_{m'} \subseteq K_{m'+1}$
and $K_{m'+1} \subseteq K_{m'+2}$ are radical extensions by roots of prime degree binomials.
Consider now the chain
\[
K = K_0 \subseteq \cdots \subseteq K_{m'} \subseteq K_{m'+1} \subseteq K_{m'+2}\;.
\]
Clearly, we have  that $L=L'(\alpha) \subseteq K_{m'}(\alpha) \subseteq K_{m'}(\alpha,\bar\alpha)=K_{m'+2}$. If we have an equality of  fields in the chain,
we can simply remove repeating copies. The constructed chain satisfies the induction step.
\end{proof}

The number $\rho$ in the above proof plays an important role in Weber's proof. In the following Lemma we collect some properties of it, which easily follow from Lemma \ref{lemBinomial}.

\begin{lem} \label{lemRho}
Let $K$ be a conjugation invariant field and consider the prime degree
binomial $X^q-a\in K[X]$. The real root $\rho=\sqrt[q]{a\bar a}$ satisfies 
$\rho= \alpha \bar \alpha$ for any root $\alpha$ of $X^q-a$. Therefore 
$\rho \in K(\alpha,\bar\alpha)$, and if $X^q-a$ is reducible, that is, has a root in $K$,
then $\rho\in K$.
\end{lem}

\begin{rem}\label{remWeber}
Lemma \ref{LemConjInv}
is not formulated as such by Weber, but the idea of the construction is due to him.
Weber  \cite{WW1,WW2} just says that he adjoins $\alpha$ and $\bar\alpha$ at the same time. He considers this clearly
as one step, and only  when needed he first adjoins  $\rho$ and then $\alpha$. In particular, 
the first adjunction in a chain  that makes $f(X)$ reducible is an extension $L\subset
L(\alpha,\bar\alpha)$ of conjugation invariant fields, which subsequently is studied in greater detail. The same holds for Dörrie \cite{D}. The objection that sometimes has been raised
to his proof, that not all extensions in the chain are conjugation invariant, is therefore
not founded.
\end{rem}

\section{A proof of Kronecker's Theorem}\label{secOurProof}

In this section, we give a proof of Kronecker's Theorem, which, in principle,
allows us to treat both reducible and irreducible radicals using the same arguments. 
However, there is one point where a deeper knowledge of Galois theory for cyclic field extensions seems to be needed for such a unification. We comment on this after
the proof, but we choose to give
an elementary  proof, where we consider the two cases (irreducible and irreducible)
separately. For the second case we use an argument that more closely follows Weber's ideas. 
We formulate Kronecker's Theorem for polynomials with real coefficients in a conjugation
invariant field.

\begin{thm}[Kronecker's Theorem]\label{thmOurKronecker}
Let $K$ be a conjugation invariant subfield of the complex numbers.
Suppose that an irreducible
polynomial equation $f(X)\in K[X]$ of odd  prime degree $p$ with real coefficients
is solvable by radicals  over $K$. 
Then exactly one root or all roots of the polynomial are real. 
\end{thm}

\begin{proof}
We first remark that $f(X)$ has at least one real root, since
it has real coefficients. As $f(X)$ is solvable, 
we may assume by Lemma \ref{LemConjInv}  that there is a conjugation invariant chain
$K=K_0 \subseteq K_1 \subseteq \cdots  \subseteq K_{n}=L $
of simple radical extensions such that $K_i$ is an extension of $K_{i-1}$
by a root of binomial of prime degree for $i=1,\ldots, n$ and $K_f\subseteq L$.
There exists an $i$ such that $f(X)$ is irreducible in $K_{i-1}$ but reducible
in $K_i=K_{i-1}(\alpha_i)$, where $\alpha_i^q=a_i\in K_{i-1}$ and $q$ is a prime number.
For ease of notation we write $\alpha$ for $\alpha_i$ and $a$ for $a_i$.
By Nagell's Lemma \ref{nagell} the degree $p$ of $f(X)$ divides $d=[K_i:K_{i-1}]$.
We distinguish between two cases depending on whether $X^q-a$ is irreducible
 or $X^q-a$ is reducible over $K_{i-1}$.

\textit{Case I.} If $X^q-a$ is irreducible over $K_{i-1}$, then $d=q$ is prime,
which implies that
$q=p$.  We may assume that  $K_{i-1}$ contains a primitive $p$-th root of unity
$\varepsilon_p$, for otherwise we can replace $K_{i-1}$ by $K_{i-1}(\varepsilon_p)$.   Both polynomials $X^p-a$ and $f(X)$ remain irreducible over $K_{i-1}(\varepsilon_p)$ by Nagell's Lemma, as the degree $p$ is relatively prime to the degree of the minimal polynomial of  $\varepsilon_p$ over $K_{i-1}$. In fact, this polynomial divides $X^{p-1}+X^{p-2}+\cdots+1$ (the minimal polynomial of $\varepsilon_p$ over $\mathbb{Q}$), so its degree is at most equal to $p-1$. Thus $K_i=K_{i-1}(\alpha)$ is a splitting field of the polynomial $X^p-a$ (see Lemma \ref{lemBinomial}\,(c)), which says that the extension $K_{i-1}\subset K_i$ is normal. Notice that in this case $[K_i:K_{i-1}]=p$.

Since $f(X)$ is reducible over $K_i$ (and irreducible over $K_{i-1}$),
the degrees of all irreducible factors of $f(X)$  are equal by  Lemma \ref{equal}.
Thus the factors of $f(X)$ over $K_i$ are linear and all roots $\beta_1,\dots, \beta_p$ of $f(X)$  belong to $K_i$.
Hence, the splitting field $(K_{i-1})_f=K_{i-1}(\beta_1,\beta_2,\dots,\beta_p)$ of $f(X)$
over $K_{i-1}$ is a subfield of $K_i$. For every $r=1,\ldots,p$, we have $[K_{i-1}(\beta_r):K_{i-1}]=p$.
It implies that $K_{i-1}(\beta_r)=K_i$, since also $K_i$ has degree $p$ over $K_{i-1}$.
Thus $K_i=(K_{i-1})_f =K_{i-1}(\beta_r)$ for every $r=1,\ldots,p$.

Suppose that
at least two roots, $\beta_1$ and $\beta_2$, are real. By Lemma \ref{lemBinomial}\,(c)
the Galois group $G((K_{i-1})_f/K_{i-1})$ is cyclic of order $p$.
Every nontrivial automorphism has order $p$ and since the group acts transitively
on the roots $\beta_r$ (see Lemma \ref{lemIsoTwo}), its elements are cycles of length $p$.
We may choose as a
generator the cycle $\sigma=(\beta_1,\beta_2,\dots,\beta_p)$ starting with the real roots
$\beta_1, \beta_2$.
Because the powers $\beta_1^j$  form a basis of $(K_{i-1})_f$ over $K_{i-1}$ (Lemma
\ref{lemSimpleExtension}), we can write
\begin{equation}
\label{eqFirst}
  \beta_2= c_0+c_1\beta_1+\cdots+ c_{p-1}\beta_1^{p-1},
\end{equation}
with  uniquely determined coefficients $c_j \in K_{i-1}$.  Since $\beta_1$ and $\beta_2$ are real, complex conjugation gives
\begin{equation*}
  \beta_2= \bar{c}_0+\bar{c}_1\beta_1+\cdots+\bar{c}_{p-1}\beta_1^{p-1},
\end{equation*}
so $\bar{c}_i=c_i$ for $i=0,1,\ldots,p-1$.
Thus all  the coefficients $c_i$ are real.

Applying the  automorphism $\sigma$ to equation \eqref{eqFirst}, we get
\begin{equation*}
 \beta_3= c_0+c_1\beta_2+\cdots+ c_{p-1}\beta_2^{p-1},
\end{equation*}
showing that $\beta_3$ is also real. By repeating  this last argument, we find that
all roots of $f(X)$ are real, if more than one root is real.

\textit{Case II.} If $X^q-a$ is reducible, then $K_i=K_{i-1}(\alpha)=K_{i-1}(\varepsilon_q)$ for a primitive
$q$-th root of unity according to Lemma \ref{lemBinomial}\,(d). In order to simplify notation, we  denote $\varepsilon_q$ by $\varepsilon$.
Also in this case, the extension $K_{i-1}\subset K_i$ is normal as a splitting field of $X^q-1$ and $[K_i:K_{i-1}]=d > 1$.

Since the extension $K_{i-1}\subset K_i$ is normal and $f(X)$ is reducible over $K_i$ (and irreducible over $K_{i-1}$), we get as in Case I that all roots $\beta_1,\dots, \beta_p$ of $f(X)$  belong to $K_i$. Hence the splitting field
$(K_{i-1})_f=K_{i-1}(\beta_1,\beta_2,\dots,\beta_p)$ of $f(X)$
over $K_{i-1}$ is a subfield of $K_i$.
We work in the field $K_i$ instead of $(K_{i-1})_f$,
which we did in Case I.

As we know, the polynomial $f(X)$ has a real root. Assume that $\beta_1$ is real. Since $K_i=K_{i-1}(\alpha)$, we can write
\begin{equation}\label{EqRootsReducible}
\beta_1 = c_0 + c_1\alpha + \dots +c_{d-1} \alpha^{d-1}=\psi(\alpha)\;,
\end{equation}
where $\psi(X)= c_0 + c_1 X + \dots +c_{d-1} X^{d-1} \in K[X]$.  By Lemma \ref{lemRho}
we have
$\rho=\sqrt[q]{a\bar a} \in K$, and  $\rho=\alpha_i \bar\alpha_i$ for all roots
$\alpha_i=\varepsilon^i\alpha$ of $X^q-a$.
We conjugate the equality \eqref{EqRootsReducible}. We write $\bar\psi(X)$ for the 
polynomial in $K[X]$ with coefficients the complex conjugates of the coefficients of the
polynomial $\psi(X)$. As $\beta_1$ is real,
we get   $\psi(\alpha)=\overline{\psi(\alpha)}=
\bar\psi(\bar\alpha)=\bar\psi(\frac{\rho}{\alpha})$.
In the equation  $\psi(\alpha)=\bar\psi(\frac{\rho}{\alpha})$ only $\alpha$ does not lie
in $K$. 
Thus the equality  $\psi(\alpha)=\bar\psi(\frac{\rho}{\alpha})$ still holds when $\alpha$ is replaced  by another
root, which is the image of $\alpha$ by any automorphism of the field
$K_{i-1}(\alpha)=K_{i-1}(\varepsilon)$.
From Lemma \ref{lemBinomial}\,(d)
it follows that
every automorphism of this field maps $\varepsilon$ onto a power of this number, and $\alpha$ onto another root of $X^q-a$.
At the same time such an automorphism maps
$\beta_1$ onto another root of the polynomial $f(X)$. Since this polynomial
is irreducible over $K_{i-1}$, any root can be mapped at any other root (see Lemma \ref{lemIsoTwo}), so the
equality $\beta_r =\bar\beta_r$ is valid for all roots of $f(X)$, showing
that they are real numbers.
\end{proof}

\begin{rem} \label{remCyclicUnique} In the proof above we have the situation that all roots $\beta_1,\dots,\beta_p$
of an irreducible polynomial $f(X)\in K[X]$ generate subfields $K(\beta_i)$ of an extension field  $L\supset K$
of the same degree $p$  over $K$. In Case I, we conclude that all these fields are equal
(and equal to $L$), since also
$L$ has degree $p$ over $K$. In case II, it is not possible to claim
the equality of all subfields $K(\beta_i)$ using the same argument.
But assuming a little more knowledge than we needed in Case II, we can establish
equality.
Also in this case, the extension $K \subset L$ is normal and cyclic, so
the Galois group is cyclic. A known property of finite cyclic groups says that for every divisor $\delta$ of the group order, there is exactly one subgroup whose order is equal to $\delta$
(see \cite[Prop. A.2.2 (b)]{B}). Since for any two roots $\beta_i$ and $\beta_j$
there is an automorphism $\sigma$ of $L$ over $K$ such that $\sigma(\beta_i)=\beta_j$ (see Lemma \ref{lemIsoTwo}), the groups $G(L/K(\beta_i))$ and
$G(L/K(\beta_j))$ are isomorphic (in fact, conjugated by $\sigma$ --- see \cite[Prop. A.9.2]{B}), so they have the same order $\delta$.
Thus the group $H=G(L/K(\beta_i))$ is the same for all $i$.
The ``trivial part'' of Galois correspondence (see Remark \ref{remGaloisCorr}) says that for intermediate
fields $M$ of a Galois extension $K \subseteq L$, we have $L^{G(L/M)}=M$.
Applying this to  $M=K(\beta_i)$,  we get $K(\beta_i)=L^H$.
Thus all the fields $K(\beta_i)$  corresponding to the roots
of $f(X)$ are equal, and we can use the argument of Case I.
Treating in this way both cases simultaneously results in a unified  and short proof of Kronecker's Theorem.
\end{rem}


\section{Weber's proof.}

In this Section, we return to the history of ``elementary'' proofs and discuss the errors
that such proofs had until recently, and  proposed corrections.

As our proof is inspired by the presentation of Nagell \cite{N}, it follows the main lines
of Weber's proof.  Given a chain  $K=K_0 \subset K_1 \subset \cdots  \subset K_{n}=L $  of simple radical extensions, the essential point is to study what happens in
the first extension $K_{i-1} \subset K_i$ such that the solvable
polynomial $f(X)$ of degree $p$
is irreducible in $K_{i-1}$ but reducible in $K_i$, where $K_i=
K_{i-1}(\alpha)$ with $\alpha$ a root of the binomial $X^q-a$, $q$ prime.
Weber (and Epstein) \cite{WW1,WW2} as well as Nagell \cite{N}
claim that $q=p$, but this does not hold if $X^q-a$
 is reducible over $K_{i-1}$.

The main point in
Weber's proof  (for the irreducible case) that differs from our proof, is the  following. As before, it can be assumed
that $\varepsilon:=\varepsilon_p \in K_{i-1}$. The real root $\beta_1$ can be written as
\begin{equation}\label{EqRoots}
\beta_1 = c_0 + c_1\alpha + \dots +c_{p-1} \alpha^{p-1}=\psi(\alpha)\;,
\end{equation}
where $\psi(X)=c_0 + c_1X + \dots +c_{p-1} X^{p-1}\in K_{i-1}(X)$
(Lemma \ref{lemSimpleExtension}).
Weber  distinguishes two cases depending on whether $\alpha \in \mathbb{R}$ or
$\alpha \notin \mathbb{R}$.

If $\alpha \in \mathbb R$,
then complex conjugation gives
that $\bar c_i=c_i$, or in other words, all $c_i$ are real,
by uniqueness  of  the coefficients in equation \eqref{EqRoots}.
As the automorphisms of the field $K_i$ over $K_{i-1}$ map $\alpha$ onto $\varepsilon^r\alpha$
for $r=0,\dots,p-1$, the other roots of $f(X)$ are
\[
\beta_r = c_0 + c_1\varepsilon^r\alpha + \dots +c_{p-1} \varepsilon^{r(p-1)}
\alpha^{p-1}\;,
\]
so they come in complex conjugate pairs and $\beta_1$ is the only real root.

If $\alpha \notin \mathbb R$,  then consider $K_{i-1}(\rho)$ with $\rho
=\sqrt[p]{a\bar a}$ (by construction $K_{i-1}$ is conjugation invariant, see Remark
\ref{remWeber}). If $f(X)$ is reducible in $K_{i-1}(\rho)$, we are back in the previous case. 
Otherwise $f(X)$ becomes reducible in the extension
$K_{i-1}(\rho)\subset K_{i-1}(\rho,\alpha)$. Then  complex conjugation gives
 $\beta_1=\psi(\alpha)=\bar\psi(\bar\alpha)=\bar\psi(\frac{\rho}{\alpha})$ and
Weber argues that, as
$\bar\alpha_s = \frac{\rho}{\alpha_s}$ for any root of $X^p-a$, the equality
$\psi(\alpha)=\bar\psi(\bar\alpha)$ still holds when $\alpha$ is replaced by another root when we apply the automorphism mapping $\beta_1$ onto this root (that is,
we map $\alpha$ onto $\varepsilon^s\alpha$). Therefore all roots are real.

The wrong claim $q=p$ is based on an application of
the following variant of Nagell's Lemma \ref{nagell}, which is proved by Weber
\cite[\S\,111, 5]{WW2}  and Dörrie  \cite[\S\,24, Satz IV]{D}.
Interestingly, the proof of this relatively elementary result by both Weber and Dörrie  does not use the concept of the degree of a field extension, which makes it relatively long and somewhat complicated. Nagell \cite{N} gives a short proof of  Lemma \ref{nagell}, using similar arguments as we do.

\begin{prop}\label{epstein}
An irreducible polynomial $f(X)$  of prime degree $p$ can
only become reducible through adjunction of a root of an \emph{[irreducible]} equation
 $\varphi(X)=0$,
whose degree is divisible by $p$.
\end{prop}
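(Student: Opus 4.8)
The plan is to observe that, once the bracketed word \emph{irreducible} is taken as part of the hypothesis, this Proposition is merely a repackaging of Nagell's Lemma (Lemma \ref{nagell}). Let $\theta$ be a root of the irreducible polynomial $\varphi(X)\in K[X]$ and set $L=K(\theta)$. The phrase ``$f(X)$ becomes reducible through adjunction of a root of $\varphi$'' is to be read as: $f$ is irreducible over $K$ but factors nontrivially over $L$. So the content to be proved is precisely that $p\mid\deg\varphi$.

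The first step is to compute $[L:K]$. Since $\varphi$ is irreducible over $K$ and admits $\theta$ as a root, it is a scalar multiple of the minimal polynomial of $\theta$ over $K$, so $[L:K]=[K(\theta):K]=\deg\varphi$. I would then apply Lemma \ref{nagell} directly to the extension $K\subset L$: here $f$ has prime degree $p$, is irreducible over $K$ and reducible over $L$, hence $p\mid[L:K]=\deg\varphi$. This already gives the assertion, so the derivation itself is essentially a single line.

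Consequently the main obstacle is not any calculation but the correct identification of $\varphi$, which is exactly the point of the error this section dissects. The indispensable hypothesis is that $\varphi$ be \emph{irreducible}: adjoining a \emph{reducible} radical means adjoining a root of $X^q-a$ with $X^q-a$ reducible over $K$, and the degree of that reducible equation is $q$, which need not be divisible by $p$, although $f$ may genuinely become reducible. What rescues the conclusion is that one must first replace $X^q-a$ by the minimal polynomial of the adjoined element — an irreducible $\varphi$ of degree $[L:K]$, a divisor of $q-1$ — and it is this degree, rather than $q$, that Nagell's Lemma forces to be a multiple of $p$. I would emphasise in the write-up that substituting the reducible $X^q-a$ for the true (irreducible) minimal polynomial is exactly the slip at the root of the inaccuracies traced in the paper, so that the bracketed ``[irreducible]'' is not cosmetic but precisely the repair the statement requires.
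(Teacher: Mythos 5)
Your proof is correct and takes essentially the same route as the paper: the paper introduces this Proposition explicitly as a ``variant of Nagell's Lemma,'' and with the bracketed irreducibility hypothesis in force your one-line reduction via $[K(\theta):K]=\deg\varphi$ followed by Lemma \ref{nagell} is exactly the intended argument. Your diagnosis of the reducible-radical pitfall --- that for prime $q$ the minimal polynomial of the adjoined element has degree dividing $q-1$, and it is this degree, not $q$, that must be a multiple of $p$ --- also matches the paper's discussion and its Example \ref{vdmonde}, where $p=5$, $q=11$ and the minimal polynomial of $\zeta_{11}$ over $\mathbb{Q}$ has degree $10$.
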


We write the word irreducible in brackets, because it occurs in \cite{D}, but
it is left out in \cite{WW2}, although the proof  in \cite{WW2} uses the assumption
that the equation $\varphi(X)=0$ is irreducible.
In \cite{WW2} the Proposition is applied to the  equation $X^{q}=a$,
with the conclusion that $q=p$. Irreducibility of this equation is not assumed,
not even implicitly, as the next step  argues that $a$ is not the $p$-th power of a number
$\beta \in K_{i-1}$, because otherwise  $\alpha$ has the form  $\varepsilon_p^k\beta$ and would belong to
$K_{i-1}$.  This is clearly a circular argument.

Dörrie \cite{D} apparently noticed the error.
To be able to use the correctly stated Proposition
\ref{epstein} he  requires the radicals in the chain to be
irreducible.
Dörrie does not realize that he might introduce reducible radicals when
making the chain conjugation invariant:
\begin{quote}
Ferner wollen wir mit jedem adjungierten Radikal unsere Kette, das noch
keine Zerlegung von $f(x)$ ermöglicht, auch gleich das komplex konjugierte Radikal
adjungieren. Das ist vielleicht überflüssig, sicher aber nicht schädlich.\footnote{Furthermore, with each adjoined radical of our chain,
which not yet makes it possible to factorize $f(x)$, we will also adjoin
at the same time the complex conjugate
radical. That is maybe superfluous, but certainly not harmful. }
\end{quote}
But actually harm can be done, as the following example shows.

\begin{ex}\label{vdmonde}
Let $\zeta$ be a primitive 7-th root of unity and let $f(X)$ be the minimal polynomial
of the real number $\beta=\zeta+\bar\zeta$. One easily computes that
\[
f(X)=X^3+X^2-2X-1\;.
\]
In fact, $\beta$ generates the maximal real subfield
of the cyclotomic field $\mathbb{Q}(\zeta)$ whose degree over $\mathbb Q$ is equal to 6.
Thus its maximal real subfield has degree 3 over $\mathbb{Q}$. A similar construction
is possible for any prime number $p$ such that $2p+1$ is also a prime number (Sophie \href{https://zbmath.org/authors/germain.sophie}{Germain} primes).

Let a chain  of simple radical extensions start
with $K_0=\mathbb{Q} \subset K_1=\mathbb Q(\alpha)$, where e.g., $\alpha =
\zeta\sqrt[7]{11}$ and  $\sqrt[7]{11}$ is the real root. Then $\alpha$ is an irreducible radical (the root of $X^7-11=0$)
and the polynomial $f(X)$ of degree 3 is irreducible over $K_1$  by  Nagell's Lemma
\ref{nagell}, as 3 does not divide  the degree $[K_1:\mathbb{Q}]=7$.
But when we extend $K_1=\mathbb Q(\alpha)$ by the complex conjugate $\bar\alpha= \zeta^{6}\sqrt[7]{11}$, then $\bar\alpha$ is a reducible radical over $K_1$
(it satisfies the equation $X^7-11=0$ and the polynomial $X^7-11$ is reducible over $K_1$).
Moreover, we have $\zeta \in K_2 =\mathbb Q(\alpha,\bar\alpha)$ as
$\alpha/\bar\alpha=\zeta^2$ and $\zeta^2$ is a primitive  $7$-th root of unity.
Hence $\beta = \zeta + \bar\zeta \in \mathbb{Q}(\alpha,\bar\alpha))$ and $f(X)$ is reducible over this field. According to Weber and Dörrie and the construction in Lemma
\ref{LemConjInv} we should in this case first
adjoin $\rho=\alpha\bar\alpha=\sqrt[7]{121}$ and then $\alpha$, but $X^7-11$
is also reducible over $\mathbb Q(\sqrt[7]{121})$, as $\sqrt[7]{11}=(\sqrt[7]{121})^4/11$
is a root in $\mathbb Q(\sqrt[7]{121})$.

It is possible to express $\beta$ in terms of $\alpha$, which Case II in the proof of 
Theorem \ref{thmOurKronecker} suggests, but it is easier to adjoin $\zeta$
in the first place, something that is allowed according to Weber  and Nagell. The 
polynomial $f(X)$ becomes reducible by the conjugation invariant extension
$\mathbb Q \subset\mathbb{Q}(\zeta)$ of degree 6. 
The automorphism  $\sigma_k$ of $\mathbb{Q}(\zeta)$ that sends $\zeta$ to $\zeta^k$, maps $\beta = \zeta+\zeta^6$ to 
$\beta_k=\zeta^k+\zeta^{7-k}$, for $k=1,\dots,6$.
 As $\beta_k=\beta_{7-k}$, we obtain
all three  roots of $f(X)$ twice.

It is easy to solve the equation $f(X)=0$ using a computer algebra system.
The three roots are
\[
\frac{\sqrt[3]{28+84\sqrt{-3}}}6 + \frac{14}{3\sqrt[3]{28+84\sqrt{-3}}}+\frac13\;,
\]
where one should always take the same values of the multiply occurring square and
cubic roots. This answer avoids the ambiguities in Cardano's formulas.
We note that the  primitive  $7$-th root of unity $\zeta$ is a root of the polynomial
$X^2-\beta X +1$, but we refrain from giving an explicit expression for $\zeta$.
\end{ex}

Weber's and Dörrie's  argument can be rescued if the adjunction of a reducible
radical making the polynomial $f(X)$ reducible is replaced by a chain of adjunctions
of irreducible radicals, using Gauss'  result that the roots of unity are expressible by irreducible radicals \cite[Ch.~21]{St}.
This is  the approach chosen by Yan Pan and Yuzhen Chen \cite{PC} and A.B.~%
\href{https://zbmath.org/authors/skopenkov.arkadij-b}{Skopenkov} \cite{Sk},
who pointed out 
an error related to reducibility  in the proof by V.V.~%
\href{https://zbmath.org/authors/prasolov.victor-v}{Prasolov} \cite{Pr}.
Instead of the conjugation invariant chain provided
by our  Lemma \ref{LemConjInv} they construct a chain where in addition
 all prime degree binomials are irreducible (\cite[Thm. 5.1]{PC} and
\cite[Lemma 8.4.15]{Sk}).
 The proof is by induction, using the fact that there exists
for any prime  $p\geq 3$ numbers  $\beta_0,\dots,\beta_{p-2}$ such that
$\beta_i\in \mathbb Z[\varepsilon_{p-1}]$ with $\varepsilon_{p-1}$ a primitive $(p-1)$-th
root of unity and
$\varepsilon_p\in \mathbb Q(\varepsilon_{p-1}, \beta_0,\dots,\beta_{p-2})$
(see \cite[(21.9)]{St}). 
In Example \ref{vdmonde} the construction of a chain of adjunctions of irreducible
ideals would lead to the adjunction of the radicals expressing the primitive root
of unity $\zeta$.

The complications caused by making all radicals irreducible can be avoided by using the
same argument in the reducible case as in our proof of Theorem \ref{thmOurKronecker}.
In fact, this argument is similar to the one Weber uses to treat equation \eqref{EqRoots}
in  the case that $f(X)$ is still reducible over $K_{i-1}(\rho)$, although more complicated. 
In the irreducible case we have 
$\varepsilon_p\in K$ and  it is immediate from \eqref{EqRoots} that varying the roots of 
$X^p-a$, that is, multiplying $\alpha$ with powers of $\varepsilon_p$, gives $p$ different
roots. In the reducible case adjunction of $\alpha$ also  adjoins $\varepsilon_q$ 
and typically $p<d\leq q$,  as in  example 
\ref{vdmonde}. The fact that all roots of $f(X)$ are obtained follows from Lemma 
\ref{lemIsoTwo}. Of course this Lemma can also be applied in the easier case $q=p$. 
Therefore both cases, where all roots are real, can be combined. 
They are characterized by the properties of the number $\rho = \sqrt[q]{a \bar a}$.

\begin{prop} 
Let $f(X)\in K(X)$ be an irreducible polynomial of odd prime degree $p$
with real coefficients lying
in a conjugation invariant field $K$  and let $K\subset L=K(\alpha)$, where $\alpha^q=a$, $a \in K$, be a normal and conjugation invariant
extension such that $f(X)$ is reducible over $L$. In case $q=p$, assume that $K$
contains a primitive $p$-th root of unity $\varepsilon$. 
Let $\rho = \sqrt[q]{a \bar a} =\alpha\bar\alpha$. 
If $\rho\in K$, then all roots of $f(X)$ are real, while if $\rho\notin K$, then $f(X)$ has 
exactly  one real root.
\end{prop}

\begin{proof}[Sketch of proof]
If $\rho \in K$, then we write
the real root $\beta_1$ as in equation \eqref{EqRootsReducible}, where the case
$q=p$ is allowed. The argument for Case II in the proof of Theorem \ref{thmOurKronecker}
applies, as remarked above. If $\rho\notin K$, then $X^q-a$ is irreducible, and $q=p$.
As $\rho\in K(\alpha)=K(\alpha,\bar\alpha)$ we have  $K(\alpha)=K(\rho)$
by the Tower Law (Lemma \ref{lemTowerLaw}).
Therefore we may replace $\alpha$ by the real number $\rho$, and Weber's argument for
the case $\alpha\in \mathbb R$ applies.
\end{proof}

As Kronecker \cite{K2} already remarked, the two cases, only one or all roots real, are for $p\equiv 3 \pmod 4$
distinguished by the sign of the discriminant of the polynomial (the square of the product
of all differences between roots). This follows from the fact that the discriminant
of a polynomial with real coefficients (without multiple factors) is positive if the number of pairs of complex conjugate roots is even and negative
if this number is odd.  It also follows that for $p\equiv 1 \pmod 4$ only irreducible
equations with positive discriminant can be solvable.

\section*{Acknowledgements}
We express our gratitude to the referee for all remarks and comments, which
contributed to a significant improvement of the paper.



\begin{thebibliography}{WW2}

\bibitem[Ab1]{Ab}
 N. H. Abel.  
\newblock Sur la resolution algébrique des équations.
\newblock In: Sylow L, Lie S, editors. \OE uvres complètes de Niels Henrik
  Abel, Vol. II. Christiania: Grøndahl \& Søn; 1881. p. 217--43.\\
\newblock Available from:
  \url{https://urn.nb.no/URN:NBN:no-nb_digibok_2014092209006}.

\bibitem[Ab2]{Bruch}
 N. H. Abel.  
\newblock Mathematische {Bruchst{\"u}cke} aus {Herrn} {N}. {H}. {Abels}
  {Briefen}.
\newblock J Reine Angew Math. 1830; 5:336--343.
\newblock 
 \doi{10.1515/crll.1830.5.336}.

\bibitem[Br]{B}
J.  Brzezi\'nski.
\newblock Galois theory through exercises.
\newblock Cham: Springer International Publishing; 2018.
\newblock \doi{10.1007/978-3-319-72326-6}.

\bibitem[Co]{C}
D. A. Cox.
\newblock Why Eisenstein Proved the Eisenstein Criterion and Why Schönemann
  Discovered It First.
\newblock Amer Math Mon. 2011; 118:3--21.
\newblock \doi{10.4169/amer.math.monthly.118.01.003}.

\bibitem[DA]{DE}
H. Dörrie.
\newblock 100 great problems of elementary mathematics. Their history and
  solution. Transl. from the German by D. Antin.
\newblock New York: Dover Publications; 1965.

\bibitem[Dö]{D}
H. Dörrie.
\newblock Triumph der Mathematik. Hundert berühmte Probleme aus zwei
  Jahrtausenden mathema\-tischer Kultur.
\newblock Breslau: Ferdinand Hirt; 1933.

\bibitem[Ed]{E}
H. M. Edwards.
\newblock Roots of solvable polynomials of prime degree.
\newblock Expo Math. 2014; 32:79--91.
\newblock \doi{10.1016/j.exmath.2013.09.005}.

\bibitem[Kr1]{K1}
L.  Kronecker.
\newblock Über die algebraisch auflösbaren Gleichungen (I. Abhandlung).
\newblock Monatsberichte der Königlich Preussischen Akademie der
  Wissenschaften zu Berlin. 1853: 365--74.
\newblock Also in: Werke Bd. IV. Herausgegeben auf Veranlassung der
  Preußischen Akademie der Wissenschaften von K. Hensel. Leipzig: B. G.
  Teubner; 1929. p. 3--11.
\newblock \doi{10.3931/e-rara-17875}.

\bibitem[Kr2]{K2}
L.  Kronecker.
\newblock Über die algebraisch auflösbaren Gleichungen (II. Abhandlung).
\newblock Monatsberichte der Königlich Preussischen Akademie der
  Wissenschaften zu Berlin. 1856: 203--215.
\newblock Also in: Werke Bd. IV. Herausgegeben auf Veranlassung der
  Preußischen Akademie der Wissenschaften von K. Hensel. Leipzig: B. G.
  Teubner; 1929. p. 27--37.
\newblock \doi{10.3931/e-rara-17875}.


\bibitem[MJP]{MJP}
S. A. Morris, A.  Jones and K. R.  Pearson.
\newblock Abstract algebra and famous impossibilities. {Squaring} the circle,
  doubling the cube, trisecting an angle, and solving quintic equations.
\newblock 2nd expanded ed. Undergraduate Texts Math. Cham: Springer; 2022.
\newblock \doi{10.1007/978-3-031-05698-7}.

\bibitem[Na]{N}
T. Nagell.
\newblock Lärobok i algebra.
\newblock Stockholm: Almqvist \& Wiksells; 1949.


\bibitem[Ne]{Neu}
P. M. Neumann.
\newblock The mathematical writings of {\'E}variste {Galois}.
\newblock Herit. Eur. Math.. Z{\"u}rich: European Mathematical Society (EMS);
  2011.
\newblock \doi{10.4171/104}.

\bibitem[PC]{PC}
Y. Pan and Y. Chen. On Kronecker’s Solvability Theorem; 2025.
\newblock arXiv:1912.07489v7.

\bibitem[PS]{PS}
B. Petri and N. Schappacher.
\newblock From Abel to Kronecker: Episodes from 19th Century Algebra.
\newblock In: O. A. Laudal and R.  Piene, editors. The Legacy of Niels Henrik Abel.
  Berlin, Heidelberg: Springer; 2004. p. 227--266.
\newblock \doi{10.1007/978-3-642-18908-1\_7}.

\bibitem[Pr]{Pr}
V. V. Prasolov.
\newblock Problems in algebra, arithmetic, and analysis (Russian).
\newblock Moscow Center for Continuous Mathematical Education; 2007.
\newblock Available from:
  \url{ftp://ftp.mccme.ru/users/prasolov/algebra/algebra2.pdf}.

\bibitem[Ro]{R}
M. I. Rosen.
\newblock Niels Hendrik Abel and Equations of the Fifth Degree.
\newblock Amer Math Mon. 1995; 102:495--505.
\newblock \doi{10.1080/00029890.1995.12004609}.

\bibitem[Sk]{Sk}
A. Skopenkov.
\newblock Mathematics via problems. {P}art 1. {A}lgebra. Vol.~25 of MSRI
  Mathematical Circles Library.
\newblock Translated from the Russian original by Paul Zeitz and Sergei G.
  Shubin, with a foreword by Paul Zeitz.
\newblock Providence (RI): American Mathematical Society; 2021.

\bibitem[St]{St}
I. Stewart.
\newblock Galois theory.
\newblock 4th ed. Boca Raton (FL): CRC Press; 2015.
\newblock \doi{10.1201/b18187}.


\bibitem[We]{W}
H. Weber.
\newblock Lehrbuch der Algebra.
\newblock 2nd ed. Braunschweig: F. Vieweg und Sohn; 1898.\\
\newblock Available from:
  \url{https://archive.org/details/lehrbuchderalgeb01webeuoft}.


\bibitem[WW1]{WW1}
H. Weber and J.  Wellstein.
\newblock Elementare Algebra und Analysis bearbeitet von Heinrich Weber. Vol.~1
  of Encyklopädie der Elementar-Mathematik; ein Handbuch für Lehrer und
  Studierende.
\newblock Leipzig: B.G. Teubner; 1903.
\newblock \doi{10.14463/GBV:1031159711}.

\bibitem[WW2]{WW2}
H. Weber and J.  Wellstein.
\newblock Arithmetik, Algebra und Analysis, von Heinrich Weber. Vol.~1 of
  Enzyklopädie der Elementar-Mathematik; ein Handbuch für Lehrer und
  Studierende.
\newblock Vierte {Auflage}, neu bearbeitet von P. {Epstein}.
  Leipzig-Berlin: B.G. Teubner; 1922.\\
\newblock \doi{10.14463/GBV:11031161872}.

\bibitem[Wi]{Wi}
A. Wiman.
\newblock Über die metacyklischen Gleichungen von Primzahlgrad.
\newblock Acta Math. 1903; 27:163--175.
\newblock \doi{10.1007/BF02421303}.

 \end{thebibliography}
\end{document}